\documentclass{amsart}

\usepackage{graphicx,epsfig}

\usepackage{amsmath,amssymb,latexsym, amsfonts, amscd, amsthm, xy}

\usepackage{url}

\usepackage{hyperref}

\input{xy}
\xyoption{all}

\usepackage[usenames]{color}

\makeindex \setcounter{tocdepth}{2}

\voffset = -20pt \hoffset = -60pt \textwidth = 450pt \textheight
=630pt \headheight = 12pt \headsep = 20pt

\theoremstyle{plain}
\newtheorem{theorem}{Theorem}[section]
\newtheorem{conjecture}[theorem]{Conjecture}
\newtheorem{proposition}[theorem]{Proposition}
\newtheorem{corollary}[theorem]{Corollary}

\newtheorem{lemma}[theorem]{Lemma}

\theoremstyle{definition}
\newtheorem{definition}[theorem]{Definition}
\newtheorem{remark}[theorem]{Remark}
\newtheorem{example}[theorem]{Example}

\def\bF{\mathbb{F}}

\def\bQ{\mathbb{Q}}
\def\bZ{\mathbb{Z}}

\def\G{\mathbf{G}}

\def\N{\mathbf{N}}

\def\S{\mathbf{S}}

\def\cO{\mathcal{O}}

\def\cS{\mathcal{S}}

\def\fB{\mathfrak{B}}

\def\fp{\mathfrak{p}}

\def\deg{\mathrm{deg}}

\def\Norm{\mathrm{Norm}}

\begin{document}

\title[Carlitz module analogues of Mersenne primes]{Carlitz module analogues of Mersenne primes, Wieferich primes, and certain prime elements in cyclotomic function fields}

\author{Nguyen Ngoc Dong Quan}

\date{April 9, 2014}

\address{Department of Mathematics \\
         University of British Columbia \\
         Vancouver, British Columbia \\
         V6T 1Z2, Canada}

\email{\href{mailto:dongquan.ngoc.nguyen@gmail.com}{\tt dongquan.ngoc.nguyen@gmail.com}}

\maketitle

\tableofcontents

\begin{abstract}

In this paper, we introduce a Carlitz module analogue of Mersenne primes, and prove Carlitz module analogues of several classical results concerning Mersenne primes. In contrast to the classical case, we can show that there are infinitely many composite Mersenne numbers. We also study the acquaintances of Mersenne primes including Wieferich and non-Wieferich primes in the Carlitz module context that were first introduced by Dinesh Thakur.

\end{abstract}

\section{Introduction}

In the number field context, a prime $M$ is called a Mersenne prime if it is of the form $M = 2^p - 1$ for some prime $p$. The Mersenne primes are among the integers of the form $(1 + a)^m - 1$, where $a, m$ are positive integers. It is a classical result that if $(1 + a)^m - 1$ is a prime for some positive integers $a, m$ with $m \ge 2$, then it is necessary that $a = 1$ and $m = p$ for some prime $p$.

There are many strong analogies between number fields and function fields. We refer the reader to the excellent references \cite{Goss}, \cite{Rosen}, \cite{Thakur-book} for these analogies. The analogous pictures between number fields and function fields are clearly reflected when one considers the analogies between the two couples $(\bZ, \bQ)$ and $(\bF[T], \bF(T))$, where $\bF$ is a finite field. The aim of this article is to search for new analogous phenomena between number fields and function fields. Specifically we will study the notion of Mersenne primes in the Carlitz module context, and relate them to the arithmetic of cyclotomic function fields. We also study the acquaintances of Mersenne primes including Wieferich and non-Wieferich primes in the Carlitz module setting that were introduced by Thakur \cite{Thaur-Wieferich-primes} \cite{Thaur-Fermat-Wilson-congruences}.

Let us now introduce a Carlitz analogue of Mersenne primes. We begin by introducing some basic notation used here.

Let $q = p^s$, where $p$ is a prime and $s$ is a positive integer. Let $\bF_q$ be the finite field of $q$ elements. Let $A = \bF_q[T]$, and let $k = \bF_q(T)$. Let $\tau$ be the mapping defined by $\tau(x) = x^q$, and let $k\langle \tau \rangle$ denote the twisted polynomial ring. Let $C : A \rightarrow k\langle \tau \rangle$ ($a \mapsto C_a$) be the Carlitz module given by $C_T = T + \tau$. Let $R$ be a commutative $k$-algebra. The definition of the Carlitz module $C$ is equivalent to saying that $C_T(a) = Ta + a^q$ for every $a \in R$.

It is known that $C_m(x)$ is analogous to $(1 + x)^m - 1 \in \bZ[x]$. This analogy suggests the following definition: a prime in $A$ is called a \textit{Mersenne prime} if it is of the form $\alpha C_P(1)$, where $P$ is a monic prime in $A$ and $\alpha$ is a unit in $A$.

To draw an analogy between the above notion of Mersenne primes and that of Mersenne primes in the number field context, we prove in Section \ref{Section-Mersenne-primes} a Carlitz module analogue of the classical result in elementary number theory that was mentioned in the first paragraph of this introduction.

Let us now describe the content of the paper. In Section \ref{Section-Mersenne-primes}, we introduce the notions of Mersenne numbers and Mersenne primes in the Carlitz module context. As remarked in \cite{Murata-Pomerance}, it is not known whether there are infinitely many primes $p$ for which the Mersenne numbers $2^p - 1$ are composite. In contrast to the number field setting, we prove in the Carlitz module context that for every $q > 2$, there are infinitely many monic primes $\wp$ in $\bF_q[T]$ such that the Mersenne numbers $C_{\wp}(1)$ are composite.

In Section \ref{Section-Wieferich-primes}, we recall the notions of Wieferich and non-Wieferich primes in the Carlitz module context that were introduced by Dinesh Thakur \cite{Thaur-Wieferich-primes} \cite{Thaur-Fermat-Wilson-congruences}. Theorem \ref{Theorem-A-Mersenne-prime-is-a-non-Wieferich-prime-in-Carlitz-module-setting} shows that every Mersenne prime is a non-Wieferich prime, which is analogous to a similar statement in the number field context.

It is a classical result in elementary number theory that for a given odd prime $p$, every prime $q$ dividing the Mersenne number $M_p := 2^p - 1$ satisfies $q \equiv 1 \pmod{p}$. The classical proof of this result is based on the notion of the order of an element modulo a prime. In Section \ref{Section-The-Carlitz-annihilators-of-primes}, we prove a Carlitz module analogue of this result which states that for a given monic prime $P$ in $A$, every monic prime $Q$ dividing the Mersenne number $M_P := \alpha C_P(1)$ with $\alpha \in \bF_q^{\times}$ satisfies $Q \equiv 1 \pmod{P}$. In order to prove this result, we introduce in Section \ref{Section-The-Carlitz-annihilators-of-primes} a notion of the Carlitz annihilator of a prime that is analogous to that of the order of an element modulo a prime. In the last section, using the arithmetic of cyclotomic function fields, we prove a criterion for determining whether a Mersenne number is prime.

It is worth mentioning that Dinesh Thakur \cite{Thaur-Fermat-Wilson-congruences} found interesting relations linking Wieferich primes in the function field context to zeta values.

\subsection{Notation.}
\label{Subsection-Notation}

In addition to the notation introduced before, let us fix some basic notation and definitions used throughout the paper.

Every nonzero element $m \in A$ can be written in the form $m = \alpha_n T^n + \cdots + \alpha_1 T + \alpha_0$, where the $\alpha_i$ are elements in $\bF_q$ and $\alpha_n \ne 0$. When $m$ is of the form as above, we say that the degree of $m$ is $n$. In notation, we write $\deg(m) = n$. We use the standard convention that $\deg(0) = -\infty$.

For each $m \in A$, define $|m| := q^{\deg(m)}$. Note that $|m|$ is the number of elements of the finite ring $A/mA$. For basic properties of $|\cdot|$, we refer the reader to \cite{Rosen}.

Fix an algebraic closure $\bar{k}$ of $k$, and set
\begin{align*}
\Lambda := \{\lambda \in \bar{k}\; | \; \text{$C_m(\lambda) = 0$ for some nonzero $m \in A$} \}.
\end{align*}
For every nonzero element $m \in A$, define $\Lambda_m := \{\lambda \in \bar{k}\; | \; C_m(\lambda) = 0 \}$. We recall the following definition.

\begin{definition}

Let $m \in A$ be a polynomial of positive degree. The field $K_m = k(\Lambda_m)$ is called a \textit{cyclotomic function field}.

\end{definition}

For each polynomial $m$ in $A$ of positive degree, we define a \textit{primitive $m$-th root of $C_m$} to be a root of $C_m$ that generates the $A$-module $\Lambda_m$. Throughout the paper, for each $m \in A$ of positive degree, we fix a primitive $m$-th root of $C_m$, and denote it by $\lambda_m$. Let $\Phi_m$ be the $m$-th cyclotomic polynomial, that is, the monic irreducible polynomial over $k$ such that $\Phi_m(\lambda_m) = 0$.

For each polynomial $m \in A$ of positive degree, set
\begin{align*}
\cS_m := \{a \in A \; | \; \text{$\gcd(a, m) = 1$ and $0 \le \deg(a) < \deg(m)$}\}.
\end{align*}

Fix an element $m \in A$ of positive degree. For each element $a \in \cS_m$, let $\sigma_m^{(a)}$ be the $k$-automorphism of $K_m$ defined by $\sigma_m^{(a)}(\lambda_m) = C_a(\lambda_m)$. Let $\G_m$ denote the Galois group of $K_m$ over $k$. It is well-known \cite{Hayes} that $\G_m := \mathrm{Gal}(K_m/k) = \{\sigma_m^{(a)} \; | \; a \in \cS_m \}$.

\section{A Carlitz module analogue of Mersenne primes}
\label{Section-Mersenne-primes}

Let $m \in A$ be a monic polynomial of degree $d \ge 1$. By \cite[Proposition 12.11]{Rosen}, we can write $C_m(x) \in A[x]$ in the form
\begin{align}
\label{Equation-The-equation-of-Cm-x}
C_m(x) = mx + [m, 1]x^q + [m, 2]x^{q^2} + \cdots + [m, d - 1]x^{q^{d- 1}} + x^{q^d} \in A[x],
\end{align}
and hence
\begin{align*}
\dfrac{C_m(x)}{x} = m + [m, 1]x^{q - 1} + [m, 2]x^{q^2 - 1} + \cdots + [m, d - 1]x^{q^{d- 1} - 1} + x^{q^d - 1} \in A[x].
\end{align*}
It is well-known that $C_m(x) = \prod_{\substack{a | m \\ \text{$a$ monic}}}\Phi_a(x)$, where $\Phi_a(x) \in A[x]$ is the $a$-th cyclotomic polynomial.

If $m = P$ for some monic irreducible polynomial $P$, then it is well-known \cite{Hayes} that $\Phi_P(x) = C_P(x)/x$ is an Eisenstein polynomial at $P$, i.e., the polynomial $[m, i]$ is congruent to zero modulo $P$ for each $1 \le i \le d - 1$, and $m$ is divisible by $P$ but not divisible by $P^2$.

From the discussion above, we see that $C_m(x)$ is analogous to the polynomial $(1 + x)^m - 1 \in \bZ[x]$ in the classical cyclotomic theory. We now prove a lemma that naturally motivates the notions of Mersenne numbers and Mersenne primes in the Carlitz module setting.

\begin{lemma}
\label{Lemma-The-1st-lemma-that-motivates-the-definition-of-Mersenne-primes}

Let $m \in A$ be a monic polynomial of degree $d \ge 1$, and let $\alpha \in A$ be a polynomial of degree $h \ge 0$. Assume that $q \ge 3$ and $C_m(\alpha)$ is a monic prime in $A$. Then $m$ is a monic prime in $A$ and $\alpha$ belongs to $\bF_q^{\times}$.

\end{lemma}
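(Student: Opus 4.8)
\emph{Strategy.} The plan is to combine the two multiplicative structures of the Carlitz polynomial — the factorization $C_m(x)=x\cdot\bigl(C_m(x)/x\bigr)$ with $C_m(x)/x\in A[x]$, and the compositional identity $C_{ab}=C_a\circ C_b$ coming from $C$ being a ring homomorphism into $k\langle\tau\rangle$ — with a single degree computation. The input I need is that the coefficient $[m,i]$ in \eqref{Equation-The-equation-of-Cm-x} has $T$-degree $q^i(d-i)$; this is readily verified by induction on $d=\deg m$ (writing $m=Tm_1+c$ and using $C_m=C_T\circ C_{m_1}+C_c$). Granting this, for $\alpha\in A$ of degree $h$ the summand $[m,i]\alpha^{q^i}$ of $C_m(\alpha)$ has degree $q^i(d-i)+q^ih=q^i(d-i+h)$, and the hypothesis $q\ge 3$ guarantees that, in each regime below, a \emph{unique} value of $i$ realizes the maximum, so that $\deg_T C_m(\alpha)$ equals that maximum with no leading-term cancellation.

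\emph{Step 1: $\alpha\in\bF_q^\times$.} Since $C_m(0)=0$ is not prime we have $\alpha\ne 0$. As $C_m(x)=x\cdot\bigl(C_m(x)/x\bigr)$ with $C_m(x)/x\in A[x]$, the element $\alpha$ divides the prime $C_m(\alpha)$ in the PID $A$; hence $\alpha$ is either a unit or an associate of $C_m(\alpha)$. A unit of $A$ lies in $\bF_q^\times$, so I may assume the latter and compare degrees. If $h\ge 1$, then among the exponents $q^i(d-i+h)$ the index $i=d$ is the strict maximum (here $q\ge 3$ rules out a tie), giving $\deg_T C_m(\alpha)=q^d h>h=\deg_T\alpha$, contradicting that $\alpha$ and $C_m(\alpha)$ are associates. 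Therefore $h=0$, and being a nonzero constant, $\alpha\in\bF_q^\times$.

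\emph{Step 2: $m$ is prime.} Suppose for contradiction that $m$ is reducible, and write $m=ab$ with $a,b\in A$ monic of degree $\ge 1$. Then $C_m(\alpha)=C_a\bigl(C_b(\alpha)\bigr)=C_b(\alpha)\cdot g$, where $g$ denotes $C_a(y)/y\in A[y]$ evaluated at $y=C_b(\alpha)$ (legitimate since $y\mid C_a(y)$). With $\alpha\in\bF_q^\times$ fixed, $\bF_q$-linearity gives $C_b(\alpha)=\alpha\,C_b(1)$, and the degree fact in the constant regime ($h=0$) yields $\deg_T C_b(\alpha)=q^{\deg b-1}\ge 1$ and $\deg_T C_m(\alpha)=q^{d-1}$; here the maximum of $q^i(d-i)$ is attained uniquely at $i=d-1$ precisely because $q\ge 3$. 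Hence $\deg_T g=q^{d-1}-q^{\deg b-1}>0$, using $d-1\ge\deg b$. Thus both $C_b(\alpha)$ and $g$ are non-units, contradicting the primality of $C_m(\alpha)$. Therefore $m$ is irreducible, i.e. a monic prime.

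\emph{Main obstacle.} The crux — and the only place the hypothesis $q\ge 3$ is genuinely used — is the degree bookkeeping forcing $\deg_T C_m(1)=q^{d-1}$ and $\deg_T C_m(\alpha)=q^d h$ for $h\ge 1$ with no cancellation of top-degree terms. When $q=2$ two coefficients $[m,i]$ can share the maximal degree and cancel (already $m=T^2$ gives $C_m(1)=T+1$, which is prime while $m$ is not), so the statement breaks down there; this matches the contrast with the classical setting emphasized in the introduction. Everything else is formal manipulation of the two factorizations of $C_m$.
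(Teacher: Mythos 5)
Your proof is correct, and it splits naturally into a part that matches the paper and a part that does not. Step 1 ($\alpha\in\bF_q^{\times}$) is essentially the paper's argument: both of you factor $\alpha$ out of $C_m(\alpha)$ and kill the case $h\ge 1$ by a degree count (the paper shows the cofactor $Q=C_m(\alpha)/\alpha$ has degree $h(q^d-1)>0$ and hence cannot be a unit; you show $\deg_T C_m(\alpha)=q^dh>h$ so $\alpha$ cannot be an associate of $C_m(\alpha)$ --- the same computation packaged differently). Step 2 is a genuinely different decomposition. The paper invokes the cyclotomic factorization $C_m(x)=\prod_{a\mid m}\Phi_a(x)$ (citing Villa-Salvador), writes $m=P_1\cdots P_s$ with $s\ge 2$, and exhibits the two non-unit factors $\Phi_{P_1}(\alpha),\Phi_{P_2}(\alpha)$ of degrees $q^{d_1-1},q^{d_2-1}$. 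You instead use the ring-homomorphism property $C_{ab}=C_a\circ C_b$ together with $y\mid C_a(y)$ to produce the factorization $C_m(\alpha)=C_b(\alpha)\cdot g$, and verify both factors are non-units by the same degree bookkeeping ($\deg_T C_b(\alpha)=q^{\deg b-1}\ge 1$ and $\deg_T g=q^{d-1}-q^{\deg b-1}>0$ since $\deg b\le d-1$). Your route is more self-contained --- it needs nothing beyond the explicit shape of $C_m(x)$ and the fact $\deg_T[m,i]=q^i(d-i)$, and it isolates exactly where $q\ge 3$ enters (uniqueness of the top-degree term, which fails at $q=2$ as your example $C_{T^2}(1)=T+1$ over $\bF_2$ shows), whereas the paper's version keeps the lemma inside the cyclotomic-polynomial framework it uses elsewhere. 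Both arguments are complete.
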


\begin{proof}

By $(\ref{Equation-The-equation-of-Cm-x})$, we know that $C_m(\alpha) = \alpha Q$, where
\begin{align}
\label{Equation-The-equation-of-Q-in-motivation-lemma-for-Mersenne-primes}
Q = m + [m, 1]\alpha^{q - 1} + [m, 2]\alpha^{q^2 - 1} + \cdots + [m, d - 1]\alpha^{q^{d - 1} - 1} + \alpha^{q^d - 1} \in A.
\end{align}
Since $C_m(\alpha) = \alpha Q$ and $C_m(\alpha)$ is a prime, we deduce that $\alpha$ and $Q$ are nonzero.

If $\deg(\alpha) = h \ge 1$, then there exists at least one prime of positive degree in the prime factorization of $\alpha$. Since $C_m(\alpha)$ is a prime in $A$, it follows that $\alpha$ is prime, and thus $Q$ is a unit in $A$, that is, $Q$ belongs to $\bF_q^{\times}$. Therefore the degree of $Q$ is zero.

For each $1 \le i \le d - 1$, we see that the degree in $T$ of $\alpha^{q^i - 1}$ equals $h(q^i - 1)$, and thus the degree in $T$ of $[m, i]\alpha^{q^{i} - 1}$ is $q^i(d - i) + h(q^{i} - 1)$. For each $1 \le i \le d- 1$, we know from Bernoulli's inequality \cite[Theorem 42]{Hardy-Littlewood-Polya} that $h(q^{d - i} - 1) \ge q^{d - i} - 1 \ge (q - 1)(d - i) > d - i$, and thus
\begin{align*}
q^i(d - i) < hq^i(q^{d - i} - 1) = h(q^d - q^i) = h(q^d - 1) - h(q^i - 1).
\end{align*}
Hence we deduce that $h(q^d - 1) > q^i(d - i) + h(q^i - 1)$. Therefore the degree in $T$ of $\alpha^{q^d - 1}$ is greater than the degree in $T$ of $[m, i]\alpha^{q^{i} - 1}$ for each $1 \le i \le d - 1$.

Using Bernoulli's inequality and noting that $q \ge 3$, we know that $h(q^d - 1) \ge q^d - 1 \ge d(q - 1) > d$, and thus the degree in $T$ of $\alpha^{q^d - 1}$ is greater than the degree in $T$ of $m$. Therefore it follows from $(\ref{Equation-The-equation-of-Q-in-motivation-lemma-for-Mersenne-primes})$ that the degree in $T$ of $Q$ is $h(q^d - 1)$. This implies that $Q$ is not a unit in $A$, which is a contradiction. This contradiction implies that the degree of $\alpha$ is zero, and thus $\alpha$ belongs to $\bF_q^{\times}$.

We now prove that $m$ is a prime in $A$. Assume the contrary, that is, $m$ is not a prime in $A$. Hence $m$ can be written in the form $m = P_1P_2\cdots P_s$, where $s \ge 2$ and the $P_i$ are (not necessarily distinct) monic primes in $A$. For each $1 \le i \le s$, let $d_i \ge 1$ be the degree in $T$ of $P_i$. By \cite[Proposition 12.3.13]{Villa-Salvador}, we know that
\begin{align}
\label{Equation-The-factorization-of-Cm-alpha-in-cyclotomic-polynomials-Mersenne-primes-motivation}
C_m(\alpha) = \prod_{\substack{a | m \\ \text{$a$ monic}}}\Phi_a(\alpha),
\end{align}
where $\Phi_a(x) \in A[x]$ is the $a$-th cyclotomic polynomial for each monic element $a$ dividing $m$.

We know that
\begin{align}
\label{Equation-The-equation-of-Phi-P1-Mersenne-primes-motivation}
\Phi_{P_1}(\alpha) = \dfrac{C_{P_1}(\alpha)}{\alpha} = P_1 + [P_1, 1]\alpha^{q - 1} + [P_1, 2]\alpha^{q^2 - 1} + \cdots + [P_1, d_1 - 1]\alpha^{q^{d_1 - 1} - 1} + \alpha^{q^{d_1} - 1},
\end{align}
where $[P_1, i] \in A$ is a polynomial of degree $q^i(d_1 - i)$ for each $1 \le i \le d_1 - 1$.

For $1 \le i \le d_1 - 2$, we know from Bernoulli's inequality that
\begin{align*}
q^{d_1 - 1 - i} \ge 1 + (d_1 - 1 - i)(q - 1) > 1 + (d_1 - 1 - i) = d_1 - i,
\end{align*}
and it thus follows that $q^{d_1 - 1} > q^{i}(d_1 - i)$. Hence the degree in $T$ of $[P_1, d_1 - 1]$ is greater than the degree in $T$ of $[P_1, d_1 - i]$ for each $1 \le i \le d_1 - 2$. Similarly we can prove that $q^{d_1 - 1} > d_1$, and thus the degree in $T$ of $[P_1, d_1 - 1]$ is greater than the degree in $T$ of $P_1$. Since $\alpha$ is a unit in $A$, it follows from $(\ref{Equation-The-equation-of-Phi-P1-Mersenne-primes-motivation})$ that the degree in $T$ of $\Phi_{P_1}(\alpha)$ is $q^{d_1 - 1} \ge 1$. Similarly one can show that the degree in $T$ of $\Phi_{P_2}(\alpha)$ is $q^{d_2 - 1} \ge 1$. By $(\ref{Equation-The-factorization-of-Cm-alpha-in-cyclotomic-polynomials-Mersenne-primes-motivation})$, we can write
\begin{align*}
C_m(\alpha) = \Phi_{P_1}(\alpha)\Phi_{P_2}(\alpha)\prod_{\substack{a | m \\ \text{$a$ monic, $a \ne P_1, P_2$}}}\Phi_a(\alpha).
\end{align*}
Since $\Phi_{P_1}(\alpha), \Phi_{P_2}(\alpha)$ are non-units in $A$, we deduce from the last identity that $C_m(\alpha)$ is not prime in $A$, which is a contradiction. This contradiction establishes that $m$ is a prime in $A$.

\end{proof}

\begin{definition}
\label{Definition-Mersenne-numbers}

Let $q > 2$.
\begin{itemize}

\item [(i)] A Mersenne number in $A$ is a polynomial of the form $\alpha C_P(1)$, where $P$ is a monic prime in $A$ and $\alpha$ is an element in $\bF_q^{\times}$.

\item [(ii)] A Mersenne prime in $A$ is a prime of the form $\alpha C_P(1)$, where $P$ is a monic prime in $A$ and $\alpha$ is an element in $\bF_q^{\times}$.

\end{itemize}

\end{definition}

\begin{example}
\label{Example-Classification-of-Mersenne-primes-in-F-3-T}

Throughout this example, fix $q = 3$. Let $\wp_1 := T^2 + 1$, $\wp_2 := T^2 + 2T + 2$, $\wp_3 := T^2 + T + 2$. We know that $\{\wp_1, \wp_2, \wp_3\}$ consists of all monic irreducible polynomials of degree $2$ in $A$. We see that $M_{\wp_1} = C_{\wp_1}(1) = T^3 + T^2 + T + 2$, $M_{\wp_2} = C_{\wp_2}(1) = T^3 + T^2 + 2$, and $M_{\wp_3} = C_{\wp_3}(1) = T^3 + T^2 + 2T + 1$. Since the $M_{\wp_i}$ are primes in $A$, the set $\S_2 \subset A$ defined by
\begin{align*}
\S_2 := \{\alpha M_{\wp_i} \; | \; \text{$\alpha \in \bF_3^{\times}$ and $1 \le i \le 3$}\}
\end{align*}
consists of all Mersenne primes of the form $\alpha C_P(1)$, where $P$ is a monic prime in $\bF_3[T]$ of degree $2$ and $\alpha \in \bF_3^{\times}$.

\end{example}

\begin{remark}
\label{Remark-A-difficult-problem-to-determining-whether-C-P-1-is-a-prime-for-each-prime-P}

In the number field setting, it is not known whether there are infinitely many Mersenne primes. Furthermore we do not know whether there exist infinitely many prime numbers $p$ for which the Mersenne numbers $2^p - 1$ are composite (see, for example, \cite{Murata-Pomerance}). In the Carlitz module context with $q > 2$, the latter has an affirmative answer, i.e., for each $q > 2$, there exist infinitely many monic primes $\wp$ in $\bF_q[T]$ for which the Mersenne numbers $C_{\wp}(1)$ are composite. The rest of this section is devoted to proving this result.

\end{remark}

We recall a Carlitz module analogue of Fermat's little theorem which is a direct consequence of \cite[Proposition 2.4]{Hayes}.

\begin{lemma}
\label{Lemma-The-Carlitz-module-analogue-of-Fermat-little-theorem}

Let $P$ be a monic prime in $A$, and let $\alpha$ be a polynomial in $A$. Then $C_{P - 1}(\alpha) \equiv 0 \pmod{P}$.

\end{lemma}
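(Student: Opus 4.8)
The plan is to reduce the Carlitz--Fermat congruence to the ordinary Fermat congruence in the residue field $A/PA$, and then to strip off a single term using the additivity of the Carlitz module. Throughout I write $d = \deg(P)$, so that $|P| = q^d$ and $A/PA$ is isomorphic to the finite field $\bF_{q^d}$.

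First I would invoke the Eisenstein description of $C_P$ recalled just before the statement: applying $(\ref{Equation-The-equation-of-Cm-x})$ with $m = P$ gives
\[
C_P(x) = Px + [P,1]x^q + \cdots + [P,d-1]x^{q^{d-1}} + x^{q^d},
\]
where each coefficient $[P,i]$ with $1 \le i \le d-1$ is divisible by $P$, as is the coefficient $P$ of $x$ itself. Reducing modulo $P$ therefore annihilates every term except the leading one, so $C_P(x) \equiv x^{q^d} \pmod{P}$ as polynomials over $A/PA$. Evaluating at $\alpha$ yields $C_P(\alpha) \equiv \alpha^{q^d} \pmod{P}$. This is precisely the content attributable to \cite[Proposition 2.4]{Hayes}, and it is the Carlitz counterpart of the statement that raising to the $p$-th power is the identity on the residue field in the classical case.

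Next I would apply the classical fact that the residue field $A/PA \cong \bF_{q^d}$ has exactly $q^d$ elements, so that $\beta^{q^d} = \beta$ for every $\beta \in A/PA$. Taking $\beta$ to be the image of $\alpha$ gives $\alpha^{q^d} \equiv \alpha \pmod{P}$, whence $C_P(\alpha) \equiv \alpha \pmod{P}$. Finally, since $C \colon A \to k\langle \tau \rangle$ is a ring homomorphism it is in particular additive, so $C_P = C_{P-1} + C_1$ with $C_1 = \mathrm{id}$. Evaluating at $\alpha$ and rearranging gives
\[
C_{P-1}(\alpha) = C_P(\alpha) - \alpha \equiv \alpha - \alpha = 0 \pmod{P},
\]
which is the assertion.

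There is no serious obstacle here: the argument is a short chain of congruences, and every step is legitimate because the coefficients $[P,i]$ lie in $A$, so that reducing the polynomial identity for $C_P$ modulo $P$ makes sense. The only point needing a little care is keeping the two reductions distinct—the Eisenstein collapse $C_P(x) \equiv x^{q^d}$ and the finite-field identity $\alpha^{q^d} \equiv \alpha$—and then remembering to peel off the $C_1 = \mathrm{id}$ contribution via additivity rather than attempting to analyze $C_{P-1}$ directly.
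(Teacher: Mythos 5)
Your proof is correct. The paper itself gives no argument for this lemma, simply recalling it as a direct consequence of Hayes's Proposition 2.4; your chain of steps --- the Eisenstein collapse $C_P(x) \equiv x^{q^d} \pmod{P}$, the identity $\alpha^{q^d} \equiv \alpha \pmod{P}$ in $A/PA \cong \bF_{q^d}$, and additivity to peel off $C_1 = \mathrm{id}$ --- is exactly the standard derivation the citation is pointing to, so this matches the intended route.
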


\begin{corollary}
\label{Corollary-Fermat-little-theorem-for-alpha=1}

Let $P$ be a monic prime in $A$. Then $C_{P - 1}(1) \equiv 0 \pmod{P}$.

\end{corollary}

We recall a theorem of Hall's \cite{Hall} that plays a key role in the proof of Theorem \ref{Theorem-For-q->-2-there-are-infinitely-many-primes-P-such-that-C-P-1-is-composite}. For a proof of this result, see, for example, \cite[Theorem 4]{Pollack}.

\begin{theorem}
\label{Theorem-Theorem-of-Hall-about-twin-primes-in-function-fields}
$(\text{Hall})$

Assume that $q > 2$. Then there exists infinitely many polynomials $\wp$ in $\bF_q[T]$ such that $\wp, \wp + 1$ are monic primes.

\end{theorem}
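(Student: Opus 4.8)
The plan is to prove the quantitative statement that
\[
\pi_2(n) := \#\{\wp \in \bF_q[T] : \wp \text{ monic}, \ \deg(\wp) = n, \ \wp \text{ and } \wp + 1 \text{ both prime}\}
\]
tends to infinity with $n$; since each such $\wp$ produces a pair of monic primes differing by the unit $1$, this immediately yields the theorem. First I would record why the hypothesis $q > 2$ is essential: over $\bF_2$ every prime of degree $\ge 1$ has nonzero constant term, hence constant term $1$, so $\wp + 1$ is divisible by $T$ and the conclusion is simply false. Any proof must therefore exploit the extra freedom in the constant term available when $q > 2$, and, crucially, it cannot merely combine the prime polynomial theorem for $\wp$ and for $\wp + 1$ separately: the monic reducible polynomials of degree $n$ number $q^n - I(n) = q^n(1 - 1/n + o(1/n))$, where $I(n)$ is the count of monic irreducibles, so a naive inclusion--exclusion bounding ``$\wp$ reducible or $\wp + 1$ reducible'' gives a negative lower bound for $n \ge 3$. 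One genuinely needs to control the correlation between the events ``$\wp$ prime'' and ``$\wp + 1$ prime''.

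The approach I would take is the function-field Hardy--Littlewood (Hypothesis H) method. I would write the primality indicator through the polynomial von Mangoldt function on $\bF_q[T]$, or equivalently count $\bF_q$-points on the explicit family parametrizing the simultaneous factorization types of $\wp$ and $\wp + 1$, and then expand the resulting sum over Dirichlet characters of small modulus. The expected main term is
\[
\pi_2(n) = \mathfrak{S}\,\frac{q^n}{n^2}\,(1 + o(1)),
\]
where $\mathfrak{S} = \prod_{P} \beta_P$ is a singular series whose local factor $\beta_P$ measures the density of residues $a \bmod P$ with both $a$ and $a + 1$ coprime to $P$. The decisive feature is that $\mathfrak{S} = 0$ exactly when some local factor vanishes, which happens precisely for $q = 2$ (the factor at $P = T$, i.e. the constant-term condition, being empty since $\{0, -1\}$ exhausts $\bF_2$), whereas for every $q > 2$ one has $\beta_T > 0$ and hence $\mathfrak{S} > 0$. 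This is the arithmetic shadow of the elementary remark above. I would then invoke the Riemann Hypothesis for function fields (Weil's bound on the zeros of the relevant $L$-functions) to bound the error by $O(q^{n/2}\,\mathrm{poly}(n))$, which is negligible against the main term of size $\asymp q^n/n^2$ for all sufficiently large $n$. Thus $\pi_2(n) \to \infty$ and the theorem follows.

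The main obstacle is the classical parity problem made precise: to upgrade the character-sum expansion into a genuine asymptotic one must know that the error terms enjoy full square-root cancellation \emph{uniformly}, and this reduces to showing that the geometric monodromy group attached to the family ``$(\wp, \wp + 1)$ of a prescribed factorization type'' is as large as possible, namely a full symmetric product. Establishing this maximal monodromy---so that the function-field Chebotarev theorem delivers the predicted positive density with a power-saving error valid for a \emph{fixed} small field such as $\bF_3$, rather than only for $q$ large---is the crux of the argument. The positivity of $\mathfrak{S}$ for $q > 2$ then guarantees that the leading term does not degenerate, which completes the proof.
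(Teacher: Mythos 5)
The first thing to note is that the paper does not prove this statement at all: it is recorded as a theorem of Hall, with the proof explicitly deferred to the literature (Pollack, Theorem 4). So you are reproving an external citation, and your attempt must be judged on its own. Your preliminary reductions are fine --- it does suffice to show that twin prime pairs exist in infinitely many degrees, and your explanation of why $q>2$ is needed is essentially correct (with the small caveat that $(T, T+1)$ is a twin prime pair over $\bF_2$; the obstruction you describe only kills pairs of degree at least $2$). The problem is the engine of the argument.

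The decisive step fails. In the function-field Hardy--Littlewood method you outline, the error term coming from Weil's Riemann Hypothesis via the explicit Chebotarev density theorem is not $O\bigl(q^{n/2}\,\mathrm{poly}(n)\bigr)$ with an absolute constant: the implied constant depends on the degree and genus of the Galois cover of the coefficient space attached to the splitting field of $F(X)\bigl(F(X)+1\bigr)$, where $F(X)=X^{n}+a_{n-1}X^{n-1}+\cdots+a_{0}$ has indeterminate coefficients. That Galois group is (generically) $S_{n}\times S_{n}$, and the resulting error is of size roughly $(n!)^{2}q^{\,n-1/2}$, which for \emph{fixed} $q$ overwhelms the main term $\mathfrak{S}\,q^{n}/n^{2}$ as $n\to\infty$. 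Consequently the method proves the asymptotic only in the regime where $q$ grows relative to $n$; computing the geometric monodromy group (which is the comparatively easy part, and is indeed maximal here) does nothing to remove the $n$-dependence of the Chebotarev error constant. What you call the crux --- upgrading maximal monodromy to a power-saving error uniform in $n$ for a fixed small field like $\bF_{3}$ --- is precisely the hard open problem in this subject; it was not available in 2014 and has since been resolved only for fixed but very large odd $q$ (Sawin--Shusterman), by methods far removed from what you describe. The proofs that actually exist for small fixed $q$ avoid the asymptotic altogether: Hall's argument is an explicit construction, and Pollack's Theorem 4 (the proof the paper points to) first establishes the Hardy--Littlewood count in the large-$q$ regime and then descends to a fixed small $q$ by a substitution trick, producing twin prime pairs only in a sparse sequence of degrees --- which is all the qualitative statement ``infinitely many'' requires. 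As written, your proposal does not yield the theorem.
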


\begin{theorem}
\label{Theorem-For-q->-2-there-are-infinitely-many-primes-P-such-that-C-P-1-is-composite}

Assume that $q > 2$. Then there exists infinitely many monic primes $\wp$ in $\bF_q[T]$ such that the Mersenne numbers $C_{\wp}(1)$ are composite.

\end{theorem}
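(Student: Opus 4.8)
The plan is to combine Hall's twin-prime theorem (Theorem \ref{Theorem-Theorem-of-Hall-about-twin-primes-in-function-fields}) with the Carlitz analogue of Fermat's little theorem (Corollary \ref{Corollary-Fermat-little-theorem-for-alpha=1}). Hall's theorem supplies infinitely many monic primes $\wp$ for which $\wp + 1$ is \emph{also} a monic prime. The key observation is that for such a $\wp$, the prime $\wp + 1$ automatically divides the Mersenne number $C_{\wp}(1)$, since $\wp = (\wp + 1) - 1$; a degree count then forces $C_{\wp}(1)$ to have strictly larger degree than its prime divisor $\wp + 1$, so that $C_{\wp}(1)$ factors nontrivially and is composite.

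To execute this, I would first apply Theorem \ref{Theorem-Theorem-of-Hall-about-twin-primes-in-function-fields} to produce an infinite set $\mathcal{T}$ of monic primes $\wp$ with $\wp + 1$ a monic prime. Since $\bF_q[T]$ contains only $q$ monic primes of degree $1$, all but finitely many $\wp \in \mathcal{T}$ satisfy $\deg(\wp) = d \ge 2$; I would discard the exceptions and retain the infinite remainder. For such a $\wp$, note that $\wp + 1$ is monic of degree $d$ as well, since adding the constant $1$ to a monic polynomial of degree $d \ge 1$ leaves the leading term $T^d$ unchanged. Setting $P := \wp + 1$, which is a monic prime with $P - 1 = \wp$, and applying Corollary \ref{Corollary-Fermat-little-theorem-for-alpha=1} to $P$ gives $C_{P-1}(1) \equiv 0 \pmod{P}$, that is, $C_{\wp}(1) \equiv 0 \pmod{\wp + 1}$. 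Hence $\wp + 1$ divides $C_{\wp}(1)$.

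It then remains to rule out that $C_{\wp}(1)$ is merely a unit multiple of $\wp + 1$, and here I would reuse the degree computation from Lemma \ref{Lemma-The-1st-lemma-that-motivates-the-definition-of-Mersenne-primes}. Evaluating $(\ref{Equation-The-equation-of-Cm-x})$ at $x = 1$ gives $C_{\wp}(1) = \wp + [\wp, 1] + \cdots + [\wp, d-1] + 1$ with $\deg([\wp, i]) = q^i(d - i)$. As shown in the proof of that lemma, the term $[\wp, d-1]$ of degree $q^{d-1}$ strictly dominates every other term (including $\wp$, of degree $d$, and the constant), so no top-degree cancellation occurs and $\deg(C_{\wp}(1)) = q^{d-1}$. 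Since $q \ge 3$ and $d \ge 2$, we have $q^{d-1} \ge 3^{d-1} > d = \deg(\wp + 1)$, so $C_{\wp}(1) = (\wp + 1) h$ with $\deg(h) = q^{d-1} - d > 0$. Both factors are then non-units, so $C_{\wp}(1)$ is composite; letting $\wp$ range over the infinite set above completes the argument.

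The main obstacle is exactly this degree comparison, which separates two regimes: when $d = 1$ one has $q^{d-1} = 1 = d$, so $C_{\wp}(1)$ would be an associate of the prime $\wp + 1$ and hence itself prime. This is precisely why the degree-$1$ twin primes must be excluded, and why it is essential that Hall's theorem guarantees \emph{infinitely many} twin primes (whence infinitely many of degree $\ge 2$) rather than merely a positive density or a single family in bounded degree.
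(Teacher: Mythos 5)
Your proposal is correct and follows essentially the same route as the paper: Hall's theorem supplies infinitely many twin monic primes $\wp,\ \wp+1$ of degree $d \ge 2$, Corollary \ref{Corollary-Fermat-little-theorem-for-alpha=1} applied to $P = \wp+1$ shows $P \mid C_{\wp}(1)$, and the degree computation $\deg(C_{\wp}(1)) = q^{d-1} > d$ shows the cofactor is a non-unit. The only cosmetic difference is that you argue directly that the factorization is nontrivial, whereas the paper phrases the same degree comparison as a contradiction with the assumption that $C_{\wp}(1)$ is prime; your explicit remark that only finitely many twins have degree $1$ is a small point the paper leaves implicit.
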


\begin{proof}

It follows from Theorem \ref{Theorem-Theorem-of-Hall-about-twin-primes-in-function-fields} that there exist infinitely many primes $\wp, P \in A$ such that $P = \wp + 1$. Take such primes $\wp, P$ of degree $n \ge 2$ in $A$. We will prove that the Mersenne number $C_{\wp}(1)$ is composite.

By Corollary \ref{Corollary-Fermat-little-theorem-for-alpha=1}, we deduce that $C_{\wp}(1) = C_{P - 1}(1) \equiv 0 \pmod{P}$, and hence $P$ divides $C_{\wp}(1)$. Assume the contrary, that is, $C_{\wp}(1)$ is a prime. Since $P$ divides  $C_{\wp}(1)$, it follows that $C_{\wp}(1) = \alpha P$ for some unit $\alpha$ in $A$. Hence
\begin{align}
\label{Equation-The-degree-of-C-wp-at-1-is-n}
n = \deg(P) = \deg(C_{\wp}(1)).
\end{align}
By \cite[Proposition 12.11]{Rosen} and since the degree of $\wp$ is $n$, we can write $C_{\wp}(x) \in A[x]$ in the form
\begin{align*}
C_{\wp}(x) = \wp x + [\wp, 1]x^q + [\wp, 2]x^{q^2} + \cdots + [\wp, n - 1]x^{q^{n - 1}} + x^{q^{n}},
\end{align*}
where $[\wp, i]$ is a polynomial of degree $q^i(n - i)$ for each $1 \le i \le n - 1$. Hence
\begin{align}
\label{Equation-C-wp-at-1}
C_{\wp}(1) = \wp + [\wp, 1] + [\wp, 2] + \cdots + [\wp, n - 1] + 1.
\end{align}
Using Bernoulli's inequality, we see that $q^{n - i - 1} \ge 1 + (q - 1)(n - i - 1)$ for every $0 \le i \le n - 2$. Since $q > 2$ and $n - i - 1 \ge 1$ for each $0 \le i \le n - 2$, we deduce that $1 + (q - 1)(n - i - 1) > n - i$, and thus $q^{n - i - 1} >  n - i$. Therefore
\begin{align*}
\deg([\wp, n - 1]) = q^{n - 1} > q^i(n - i) = \deg([\wp, i])
\end{align*}
for every $0 \le i \le n - 2$. It thus follows from $(\ref{Equation-C-wp-at-1})$ that the degree of $C_{\wp}(1)$ is $q^{n - 1}$, which contradicts $(\ref{Equation-The-degree-of-C-wp-at-1-is-n})$. Thus $C_{\wp}(1)$ is composite, which proves our contention.

\end{proof}

\section{Wieferich primes and non-Wieferich primes}
\label{Section-Wieferich-primes}

In this section, we recall the notion of Wieferich primes in $A$ that was introduced by Thakur \cite{Thaur-Wieferich-primes} \cite{Thaur-Fermat-Wilson-congruences}. The aim of this section is to prove that a Mersenne prime is a non-Wieferich prime, which is analogous to a similar statement in the number field context. We begin by recalling the notions of Wieferich primes and non-Wieferich primes in the function field context.

\begin{definition}
\label{Definition-Wieferich-primes}
(see \cite{Thaur-Wieferich-primes} and \cite{Thaur-Fermat-Wilson-congruences})

Let $q > 2$, and let $\wp$ be a prime in $A$. Let $\alpha$ be the leading coefficient of $\wp$, and let $P$ be the unique monic prime in $A$ such that $\wp = \alpha P$. The prime $\wp$ is called \textit{a Wieferich prime} if $C_{P - 1}(1) \equiv 0 \pmod{P^2}$.

\end{definition}

\begin{definition}
\label{Definition-Non-Wieferich-primes}
(see \cite{Thaur-Wieferich-primes} and \cite{Thaur-Fermat-Wilson-congruences})

We maintain the same notation and assumptions as in Definition \ref{Definition-Wieferich-primes}. The prime $\wp$ is called \textit{a non-Wieferich prime} if $C_{P - 1}(1) \not\equiv 0 \pmod{P^2}$.

\end{definition}

In the number field case, it is a classical result that a Mersenne prime $p \in \bZ$ is a non-Wieferich prime. We now prove a Carlitz module analogue of this result.

\begin{theorem}
\label{Theorem-A-Mersenne-prime-is-a-non-Wieferich-prime-in-Carlitz-module-setting}

Assume that $q > 2$. Let $M_P = \alpha C_P(1)$ be a Mersenne prime, where $\alpha$ is a unit in $A$ and $P$ is a monic prime in $A$. Then $M_P$ is a non-Wieferich prime.

\end{theorem}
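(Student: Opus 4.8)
The plan is to transpose the classical argument---that a Mersenne prime $2^p-1$ is non-Wieferich---into the Carlitz setting, with the multiplicative order of $2$ modulo $M$ replaced by the order of $1$ under the Carlitz action of $A$ modulo $Q$. Write $M_P = \beta Q$, where $\beta \in \bF_q^\times$ is the leading coefficient and $Q$ is the monic prime associated to $M_P$; by Definition \ref{Definition-Non-Wieferich-primes} it suffices to show that $C_{Q-1}(1) \not\equiv 0 \pmod{Q^2}$. Since $\alpha C_P(1) = M_P = \beta Q$ with $\alpha,\beta \in \bF_q^\times$, we get $C_P(1) = \gamma Q$ for $\gamma := \beta\alpha^{-1} \in \bF_q^\times$; in particular $C_P(1) \equiv 0 \pmod{Q}$.

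First I would establish the analogue of the congruence $Q \equiv 1 \pmod{P}$. Because $C\colon A \to k\langle\tau\rangle$ is a ring homomorphism, the set $I := \{a \in A : C_a(1) \equiv 0 \pmod{Q}\}$ is an ideal of $A$: it is closed under addition since $C_{a+b} = C_a + C_b$, and if $a \in I$ then $C_{ca}(1) = C_c(C_a(1)) \equiv C_c(0) = 0 \pmod{Q}$ for every $c \in A$. We have just seen that $P \in I$, whereas $1 \notin I$ because $C_1(1) = 1 \not\equiv 0 \pmod{Q}$. As $P$ is irreducible and $A$ is a principal ideal domain, this forces $I = (P)$, i.e. $P$ is exactly the Carlitz annihilator of $1$ modulo $Q$. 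Now Corollary \ref{Corollary-Fermat-little-theorem-for-alpha=1} gives $C_{Q-1}(1) \equiv 0 \pmod{Q}$, so $Q-1 \in I = (P)$, and we may write $Q - 1 = Pk$ for some $k \in A$.

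The remaining step linearizes modulo $Q^2$, playing the role of the binomial expansion $(1+M)^{(M-1)/p}\equiv 1 + \tfrac{M-1}{p}M$ in the classical proof. Using $C_{Q-1} = C_{Pk} = C_k\circ C_P$ together with the $\bF_q$-linearity of $C_k$, I compute
\begin{align*}
C_{Q-1}(1) = C_k(C_P(1)) = C_k(\gamma Q) = \gamma\, C_k(Q) = \gamma\Bigl(kQ + \sum_{i\ge 1}[k,i]\,Q^{q^i}\Bigr) \equiv \gamma k Q \pmod{Q^2},
\end{align*}
since each term $Q^{q^i}$ with $i \ge 1$ is divisible by $Q^q$, hence by $Q^2$. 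As $\gamma$ is a unit, $C_{Q-1}(1) \equiv 0 \pmod{Q^2}$ would be equivalent to $Q \mid k$, which I rule out by a degree count: $Q$ prime gives $\deg(Q) \ge 1$, so $\deg(Q-1) = \deg(Q)$, and the exact factorization $Q - 1 = Pk$ yields $\deg(k) = \deg(Q) - \deg(P) < \deg(Q)$ because $\deg(P) \ge 1$; moreover $k \ne 0$ since $Q \ne 1$. A nonzero polynomial of degree strictly less than $\deg(Q)$ cannot be divisible by $Q$, so $Q \nmid k$ and $C_{Q-1}(1) \not\equiv 0 \pmod{Q^2}$, whence $M_P$ is non-Wieferich. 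I expect the main obstacle to be the identification $I = (P)$ --- that $P$ is precisely the Carlitz annihilator of $1$ modulo $Q$ --- since the dedicated annihilator machinery is only developed in Section \ref{Section-The-Carlitz-annihilators-of-primes} and must here be reproduced by hand; everything afterward is $\bF_q$-linearity plus a one-line degree estimate.
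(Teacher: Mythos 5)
Your proof is correct, and all of its individual steps check out (the ideal properties of $I$, the identity $C_{Pk}=C_k\circ C_P$, the $\bF_q$-linearity giving $C_k(\gamma Q)=\gamma C_k(Q)$, the observation that $Q^{q^i}\equiv 0\pmod{Q^2}$ for $i\ge 1$, and the final degree count); but it is organized rather differently from the paper's argument. The paper proceeds by contradiction: writing $M_P=\beta\wp$ with $\wp$ monic and $\gamma=\alpha^{-1}\beta$, it uses Fermat's little theorem at $P$ (not at $\wp$) to factor $\gamma\wp-1=C_{P-1}(1)=PQ$, then under the Wieferich assumption computes $C_Q(\gamma\wp)=C_{PQ}(1)\equiv\gamma-1\pmod{\wp^2}$, divides by $\wp$ to force first $\gamma=1$ and then $\wp\mid Q$, and finally concludes that $\wp$ would be a unit. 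You instead establish up front that the annihilator ideal $I=\{a\in A: C_a(1)\equiv 0\pmod{Q}\}$ equals $(P)$ --- which is precisely the content the paper only develops later, in Section \ref{Section-The-Carlitz-annihilators-of-primes} via Proposition \ref{Proposition-The-result-motivating-the-definition-of-Carlitz-order-of-a-prime} and Theorem \ref{Theorem-Every-prime-dividing-a-Carlitz-module-Mersenne-number-is-congruent-to-1-mod-P} --- so that you get the clean factorization $Q-1=Pk$ with no unit attached, and then the congruence $C_{Q-1}(1)\equiv\gamma kQ\pmod{Q^2}$ together with $0\le\deg(k)<\deg(Q)$ finishes the proof directly. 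Your route buys a non-contradiction argument that exhibits the Fermat-quotient $\gamma k$ explicitly and sidesteps the paper's bookkeeping with the constant $\gamma-1$; the cost is that you must reprove inline the fact $Q\equiv 1\pmod{P}$ for the monic prime $Q$ underlying $M_P$, which your ideal argument does correctly and self-containedly. Either way the statement is established.
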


\begin{proof}

Let $\beta \in \bF_q^{\times}$ be the leading coefficient of $M_P$, and let $\wp$ be the unique monic prime in $A$ such that $M_P = \beta \wp$. Define
\begin{align}
\label{Equation-The-definition-of-gamma-in-lemma-that-Mersenne-primes-are-non-Wieferich-primes}
\gamma = \alpha^{-1}\beta \in \bF_q^{\times}.
\end{align}
We see that
\begin{align}
\label{Equation-C-P-1-equals-alpha-inverse-times-beta-times-wp}
C_P(1) = \alpha^{-1}M_P = \gamma \wp.
\end{align}
By Corollary \ref{Corollary-Fermat-little-theorem-for-alpha=1}, we know that $\gamma \wp - 1 = C_P(1) - 1 = C_{P - 1}(1) \equiv 0 \pmod{P}$, and thus there exists a nonzero element $Q$ in $A$ such that
\begin{align}
\label{Equation-alpha-inverse-times-beta-wp-minus-1-equals-PQ}
\gamma \wp  - 1 = PQ.
\end{align}

Assume the contrary, that is, $M_P$ is a Wieferich prime. Therefore we deduce that
\begin{align*}
C_{PQ}(1) = C_{\gamma \wp  - 1}(1) = C_{\gamma\wp}(1) - 1 = \gamma C_{\wp}(1) - 1 \equiv  \gamma - 1 \pmod{\wp^2}.
\end{align*}
Hence it follows from $(\ref{Equation-C-P-1-equals-alpha-inverse-times-beta-times-wp})$ that $C_Q(\gamma \wp) = C_Q(C_P(1)) = C_{PQ}(1) \equiv \gamma - 1 \pmod{\wp^2}$, and therefore
\begin{align}
\label{Equation-The-1st-equation-in-lemma-that-Mersenne-primes-are-non-Wieferich-primes}
C_Q(\gamma \wp) - (\gamma - 1) \equiv 0 \pmod{\wp^2}.
\end{align}

Let $r \ge 0$ be the degree of $Q$. If $r = 0$, we see that $Q$ is a unit in $\bF_q^{\times}$. Hence $C_Q(\gamma \wp) = Q\gamma \wp$, and it thus follows from $(\ref{Equation-The-1st-equation-in-lemma-that-Mersenne-primes-are-non-Wieferich-primes})$ that $Q\gamma \wp - (\gamma - 1) \equiv 0 \pmod{\wp^2}$. Thus $\wp^2$ divides $Q\gamma \wp - (\gamma - 1)$, which implies that
\begin{align*}
\deg(\wp) = \deg(Q\gamma \wp - (\gamma - 1)) \ge \deg(\wp^2) = 2\deg(\wp).
\end{align*}
Hence $\deg(\wp) = 0$, which is a contradiction since $\wp$ is a monic prime of positive degree. Thus $\deg(Q) = r \ge 1$.

We can write $C_Q(x) \in A[x]$ in the form $C_Q(x) = Qx + [Q, 1]x^q + \cdots + [Q, r - 1]x^{q^{r-1}} + [Q, r]x^{q^r}$, where $[Q, i] \in A$ is a polynomial of degree $q^i(r - i)$ for each $1 \le i \le r - 1$ and $[Q, r] \in \bF_q^{\times}$ is the leading coefficient of $Q$. From the equation of $C_Q(x)$, we deduce that
\begin{align}
\label{Equation-The-2nd-equation-in-lemma-that-Mersenne-primes-are-non-Wieferich-primes}
\dfrac{C_Q(\gamma \wp) - (\gamma - 1)}{\wp} = \dfrac{\gamma C_Q(\wp) - (\gamma - 1)}{\wp} = \gamma(Q + [Q, 1]\wp^{q - 1} + \cdots + [Q, r]\wp^{q^r - 1}) - \dfrac{(\gamma - 1)}{\wp}.
\end{align}

By $(\ref{Equation-The-1st-equation-in-lemma-that-Mersenne-primes-are-non-Wieferich-primes})$, we deduce that
\begin{align}
\label{Equation-The-3rd-equation-in-lemma-that-Mersenne-primes-are-non-Wieferich-primes}
\dfrac{C_Q(\gamma \wp) - (\gamma - 1)}{\wp} \equiv 0 \pmod{\wp},
\end{align}
and thus there is an element $R$ in $A$ such that $\dfrac{C_Q(\gamma \wp) - (\gamma - 1)}{\wp} = \wp R \in A$. We deduce from $(\ref{Equation-The-2nd-equation-in-lemma-that-Mersenne-primes-are-non-Wieferich-primes})$ that $\dfrac{(\gamma - 1)}{\wp} \in A$, and thus $\wp$ divides $\gamma - 1$. Since $\gamma - 1$ belongs to $\bF_q$ and $\wp$ is a monic prime of positive degree, we deduce that $\gamma - 1 = 0$, and hence $\gamma = 1$.

For $1 \le i \le r$, we see that $q^i - 1 \ge 1$, and it thus follows that $\wp^{q^i - 1} \equiv 0 \pmod{\wp}$. By $(\ref{Equation-The-2nd-equation-in-lemma-that-Mersenne-primes-are-non-Wieferich-primes})$, $(\ref{Equation-The-3rd-equation-in-lemma-that-Mersenne-primes-are-non-Wieferich-primes})$ and since $\gamma = 1$, we deduce from the last congruences that $0 \equiv \dfrac{C_Q(\wp)}{\wp} \equiv Q \pmod{\wp}$. Therefore there exists a nonzero element $P_1 \in A$ such that $P_1\wp = Q$.

By $(\ref{Equation-alpha-inverse-times-beta-wp-minus-1-equals-PQ})$ and since $\gamma = 1$, we deduce that $\wp - 1 = PQ = P(P_1\wp)$, and thus $\wp(1 - P_1P) = 1$. Therefore $\wp$ is a unit in $A$, which is a contradiction. Thus $M_P$ is a non-Wieferich prime.

\end{proof}

Thakur \cite{Thaur-Fermat-Wilson-congruences} recently found interesting relations linking Wieferich primes in the function field context to zeta values. Also in \cite{Thaur-Fermat-Wilson-congruences}, Thakur made the following conjecture that relates the degree of a Wieferich prime to the characteristic $p$.
\begin{conjecture}
$(\text{Thakur's conjecture})$

For every $p > 2$, the degree of a Wieferich prime in $A = \bF_q[T]$ is divisible by the characteristic $p$.

\end{conjecture}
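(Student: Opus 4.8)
\noindent Since the Carlitz module is a ring homomorphism, $C_{P-1}(1) = C_P(1) - C_1(1) = C_P(1) - 1$, so for a monic prime $P$ of degree $d$ the Wieferich condition $C_{P-1}(1) \equiv 0 \pmod{P^2}$ is equivalent to $C_P(1) \equiv 1 \pmod{P^2}$. Using $C_P(x) = x\Phi_P(x)$ and the Eisenstein expansion recalled in Section \ref{Section-Mersenne-primes}, one has $C_P(1) - 1 = P + \sum_{i=1}^{d-1}[P,i]$ with each $[P,i] \equiv 0 \pmod P$, so that $[P,i]/P \in A$. The plan is to study the \emph{Carlitz--Fermat quotient}
\[
w_P := \frac{C_P(1)-1}{P} = 1 + \sum_{i=1}^{d-1}\frac{[P,i]}{P} \in A,
\]
for which $P$ is Wieferich if and only if $w_P \equiv 0 \pmod P$. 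The conjecture then becomes the implication $w_P \equiv 0 \pmod P \implies p \mid d$.

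The first step is to reduce the target to a statement about the derivative $P' = \tfrac{d}{dT}P$. Since $P$ is irreducible it is separable, so $P' \ne 0$, and writing $P = T^d + c_{d-1}T^{d-1} + \cdots$ we see that the coefficient of $T^{d-1}$ in $P'$ equals $d \bmod p$; hence $p \mid d$ is equivalent to $\deg_T P' \le d-2$. I would then exploit two structural facts about the coefficients $[m,i]$: they are $\bF_q$-linear in $m$ (because $C$ is additive), so $[P,i] = \sum_j c_j [T^j,i]$, and they obey the recursion $[Tm,i] = T[m,i] + [m,i-1]^q$ from $C_{Tm} = (T+\tau)C_m$, with $[1,i]=\delta_{i,0}$. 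Differentiating this recursion in $T$ and using that $\tfrac{d}{dT}(x^q)=0$ in characteristic $p$ collapses it to $[Tm,i]' = [m,i] + T[m,i]'$, a clean relation tying the $T$-derivatives of the coefficients back to the coefficients themselves. The hope is that, after reducing modulo $P$ (where $T^{q^d}\equiv T$) and summing over $i$, the contribution of the top coefficients of $P$ to $w_P$ organizes into an expression whose obstruction to vanishing is governed precisely by $d \bmod p$.

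A second and probably more robust route uses the cyclotomic function field $K_P = k(\lambda_P)$, in which $P$ is totally ramified with $\lambda_P$ a uniformizer. Since $\Phi_P(x) = \prod_{\sigma \in \G_P}(x - \sigma(\lambda_P))$, we obtain $C_P(1) = \Phi_P(1) = \Norm_{K_P/k}(1-\lambda_P)$, so the Wieferich condition is exactly $\Norm_{K_P/k}(1-\lambda_P) \equiv 1 \pmod{P^2}$, a congruence on the local norm of the unit $1-\lambda_P$ at the prime $\fp$ of $K_P$ above $P$. Passing to the completion and applying the Carlitz logarithm (equivalently, the formal logarithm of $1-\lambda_P$) converts this norm into a trace, which I would match against the different and discriminant data of $K_P/k$. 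This is the regime in which Thakur links the Fermat quotient to Carlitz zeta values; accordingly I would try to express $w_P \bmod P$ as such a special value — plausibly a Bernoulli--Carlitz number $BC_n$ — and then invoke a von Staudt--Clausen--type divisibility for these numbers to force $p \mid d$.

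The main obstacle, and the reason the statement remains only conjectural, is the absence of a sufficiently explicit closed form for $w_P \bmod P$. Naive degree bookkeeping is useless here: the dominant term of $w_P$ comes from $[P,d-1]/P$ and has $T$-degree $q^{d-1}-d$, which vastly exceeds $d$, so reduction modulo $P$ thoroughly mixes all the coefficients and no single term visibly carries the factor $d$. Moreover the converse of the conjecture is false, so $w_P$ cannot be a unit multiple of $d$; the quotient must depend on finer arithmetic, and isolating exactly the one-sided implication $w_P \equiv 0 \implies p \mid d$ appears to require the precise congruence properties of Carlitz zeta values modulo $P$. Making that dependence explicit and unconditional is the crux I do not expect to resolve by the elementary methods used elsewhere in this paper.
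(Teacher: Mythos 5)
You have not proved the statement, and you say so yourself in your final paragraph --- so the first thing to be clear about is that there is no gap to patch but rather a missing proof: the implication $w_P \equiv 0 \pmod{P} \implies p \mid \deg P$ is never established. This is in fact the honest state of affairs, because the statement is a \emph{conjecture}: the paper states it without proof, attributing it to Thakur's preprint on Fermat--Wilson congruences and zeta values, and it remains open. So there is no proof of the paper's to compare yours against; a submission claiming to prove it would have been a red flag, and your explicit concession that the crux is unresolved is the correct conclusion.

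That said, your preliminary reductions are sound and worth recording. The identity $C_{P-1}(1) = C_P(1) - 1$ (from additivity of $a \mapsto C_a$), the Eisenstein property giving $P \mid [P,i]$ for $1 \le i \le d-1$, and hence the reformulation ``$P$ is Wieferich iff $w_P = 1 + \sum_{i=1}^{d-1}[P,i]/P \equiv 0 \pmod{P}$'' are all correct, as is the recursion $[Tm,i] = T[m,i] + [m,i-1]^q$ and its differentiated form $[Tm,i]' = [m,i] + T[m,i]'$, and the norm identity $C_P(1) = \Phi_P(1) = \Norm_{K_P/k}(1-\lambda_P)$ (which is exactly the paper's equation in Section \ref{Section-A-criterion-for-determing-whether-a-Mersenne-number-is-prime}). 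Your diagnosis of why elementary methods fail is also accurate: the degree bookkeeping that powers the paper's actual theorems (Lemma \ref{Lemma-The-1st-lemma-that-motivates-the-definition-of-Mersenne-primes}, Theorems \ref{Theorem-For-q->-2-there-are-infinitely-many-primes-P-such-that-C-P-1-is-composite} and \ref{Theorem-A-Mersenne-prime-is-a-non-Wieferich-prime-in-Carlitz-module-setting}) compares a dominant term against the rest, whereas here the dominant term $[P,d-1]/P$ has degree $q^{d-1}-d$ far above $\deg P$, so reduction modulo $P$ destroys any such hierarchy. Two cautions: your assertion that ``the converse of the conjecture is false'' is plausible but not justified by anything in the paper, so it should be flagged as unproven rather than used as a structural constraint on $w_P$; and the zeta-value route you sketch is indeed the direction of Thakur's work cited here, but turning the norm congruence into a Bernoulli--Carlitz divisibility of von Staudt--Clausen type is precisely the open content of the conjecture, not a known lemma you can invoke.
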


\section{The Carlitz annihilators of primes}
\label{Section-The-Carlitz-annihilators-of-primes}

In this section, we introduce a notion of the Carlitz annihilator of a prime. By way of illustration, we prove a Carlitz module analogue of a classical result concerning congruences for prime divisors of the Mersenne numbers in elementary number theory.

For a monic prime $P \in A$, we know from Corollary \ref{Corollary-Fermat-little-theorem-for-alpha=1} that $C_{P - 1}(1) \equiv 0 \pmod{P}$. Hence using Proposition 1.6.5 and Lemma 1.6.8 in \cite{Goss}, the following result follows immediately.

\begin{proposition}
\label{Proposition-The-result-motivating-the-definition-of-Carlitz-order-of-a-prime}

Let $P$ be a monic prime in $A$ of positive degree. Then there exists a unique monic polynomial $\wp_{P} \in A$ of positive degree satisfying the following.
\begin{itemize}

\item [(i)] $C_{\wp_{P}}(1) \equiv 0 \pmod{P}$;

\item [(ii)] for any nonzero element $a \in A$, $\wp_{P}$ divides $a$ if and only if $C_a(1) \equiv 0 \pmod{P}$; and

\end{itemize}

\end{proposition}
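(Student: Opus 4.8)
The plan is to realize $\wp_P$ as the monic generator of a naturally arising ideal. Specifically, I would consider the set
\[
\fb := \{a \in A \;|\; C_a(1) \equiv 0 \pmod{P}\},
\]
and argue that $\fb$ is a nonzero proper ideal of $A$. Since $A = \bF_q[T]$ is a principal ideal domain and its units are exactly $\bF_q^{\times}$, such an ideal has a unique monic generator $\wp_P$; properties (i) and (ii) then follow immediately from the identification $\fb = (\wp_P)$. Indeed, (i) is precisely the assertion $\wp_P \in \fb$, and (ii) is the tautology that for nonzero $a \in A$ one has $a \in (\wp_P)$ if and only if $\wp_P \mid a$. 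Thus the whole proposition reduces to checking that $\fb$ is an ideal and that it is neither zero nor all of $A$.

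The ideal axioms follow from the ring-homomorphism properties of the Carlitz module recorded in \cite{Goss}. For closure under addition I would use the additivity $C_{a + b} = C_a + C_b$: if $a, b \in \fb$, then $C_{a+b}(1) = C_a(1) + C_b(1) \equiv 0 \pmod{P}$, so $a + b \in \fb$. For absorption, let $a \in \fb$ and $c \in A$ be arbitrary, and write $C_a(1) = Pg$ with $g \in A$. Using the multiplicativity $C_{ca} = C_c \circ C_a$, one computes $C_{ca}(1) = C_c(C_a(1)) = C_c(Pg)$. Writing $C_c(x) = \sum_{i \ge 0}[c, i]x^{q^i}$ and observing that every exponent $q^i$ satisfies $q^i \ge 1$ (equivalently, $C_c$ has vanishing constant term), each monomial $[c, i](Pg)^{q^i}$ is divisible by $P^{q^i}$, hence by $P$. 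Therefore $C_{ca}(1) \equiv 0 \pmod{P}$ and $ca \in \fb$.

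For the remaining two facts, nonvanishing of $\fb$ comes from Corollary \ref{Corollary-Fermat-little-theorem-for-alpha=1}, which gives $C_{P - 1}(1) \equiv 0 \pmod{P}$, so that $P - 1 \in \fb$; since $\deg(P) \ge 1$ we have $P - 1 \ne 0$, whence $\fb \ne (0)$. That $\fb$ is proper follows from $C_1(1) = 1 \not\equiv 0 \pmod{P}$, again because $\deg(P) \ge 1$; thus $1 \notin \fb$ and $\fb \ne A$. Consequently the monic generator $\wp_P$ is neither zero nor a unit, so $\deg(\wp_P) \ge 1$, and its uniqueness as a monic generator of $\fb$ yields the uniqueness in the statement.

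I expect the main point requiring care to be the absorption step: the equality $C_{ca}(1) = C_c(C_a(1))$ must be justified through the composition law $C_{ca} = C_c \circ C_a$, and the reduction modulo $P$ inside $C_c$ relies on the vanishing constant term together with $q^i \ge 1$. This is exactly where the structural results cited from \cite{Goss} are invoked, and everything else is formal once $\fb$ is known to be a nonzero proper ideal of the principal ideal domain $A$.
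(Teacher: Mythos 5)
Your proposal is correct and follows essentially the same route as the paper: the paper simply invokes Proposition 1.6.5 and Lemma 1.6.8 of \cite{Goss} (which establish exactly that the annihilator $\{a \in A \mid C_a(1) \equiv 0 \pmod{P}\}$ is an ideal of the PID $A$) together with Corollary \ref{Corollary-Fermat-little-theorem-for-alpha=1} to see that this ideal is nonzero. Your argument just makes that citation self-contained by verifying the ideal axioms directly from $C_{a+b} = C_a + C_b$ and $C_{ca} = C_c \circ C_a$, and your absorption step (divisibility of each monomial $[c,i](Pg)^{q^i}$ by $P$) is sound.
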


Proposition \ref{Proposition-The-result-motivating-the-definition-of-Carlitz-order-of-a-prime} motivates the following definition that is a Carlitz module analogue of the order of an element in a finite group.

\begin{definition}
\label{Definition-The-Carlitz-module-order-of-a-prime-in-F-q}

Let $P$ be a monic prime in $A$ of positive degree, and let $\wp_P$ be the unique monic polynomial satisfying $(i), (ii)$ in Proposition \ref{Proposition-The-result-motivating-the-definition-of-Carlitz-order-of-a-prime}. The polynomial $\wp_P$ is called \textit{the Carlitz annihilator of $P$}.

\end{definition}

By Corollary \ref{Corollary-Fermat-little-theorem-for-alpha=1} and part $(ii)$ of Proposition \ref{Proposition-The-result-motivating-the-definition-of-Carlitz-order-of-a-prime}, the following result follows immediately.

\begin{lemma}
\label{Lemma-The-Carlitz-order-of-P-divides-P-minus-1}

Let $P$ be a monic prime in $A$, and let $\wp_P$ be the Carlitz annihilator of $P$. Then $\wp_P$ divides $P - 1$.

\end{lemma}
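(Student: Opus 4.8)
The plan is to obtain the divisibility as an immediate consequence of the defining property (ii) of the Carlitz annihilator, applied to the single well-chosen input $a = P - 1$. The characterization in Proposition \ref{Proposition-The-result-motivating-the-definition-of-Carlitz-order-of-a-prime}(ii) converts the congruence $C_a(1) \equiv 0 \pmod{P}$ into the divisibility $\wp_P \mid a$, so all I need is to exhibit that congruence for $a = P - 1$, which is exactly what the Carlitz analogue of Fermat's little theorem provides.

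Concretely, I would first invoke Corollary \ref{Corollary-Fermat-little-theorem-for-alpha=1} to record that
\begin{align*}
C_{P - 1}(1) \equiv 0 \pmod{P}.
\end{align*}
Before feeding $a = P - 1$ into property (ii), I would note that this element is nonzero: since $P$ is a monic prime of positive degree, subtracting the constant $1$ leaves the leading term untouched, so $\deg(P - 1) = \deg(P) \ge 1$ and in particular $P - 1 \ne 0$. This makes $P - 1$ a legitimate input for the characterization, which is phrased only for nonzero elements of $A$.

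Finally I would apply the ``if'' direction of Proposition \ref{Proposition-The-result-motivating-the-definition-of-Carlitz-order-of-a-prime}(ii) with $a = P - 1$: the congruence $C_{P - 1}(1) \equiv 0 \pmod{P}$ established above forces $\wp_P \mid P - 1$, which is the assertion. The argument is purely a substitution into the defining property, so there is no genuine obstacle; the only point requiring even a word of justification is the nonvanishing of $P - 1$, and that is settled by the degree count above.
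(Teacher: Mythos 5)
Your proposal is correct and is exactly the paper's argument: the paper derives the lemma ``immediately'' from Corollary \ref{Corollary-Fermat-little-theorem-for-alpha=1} together with part (ii) of Proposition \ref{Proposition-The-result-motivating-the-definition-of-Carlitz-order-of-a-prime}, which is precisely your substitution $a = P - 1$. Your extra remark that $P - 1 \ne 0$ is a harmless (and correct) bit of added care that the paper leaves implicit.
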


Return to the number field context, and let $M_p := 2^p - 1$ be a Mersenne number for some odd prime $p$. A classical result in elementary number theory says that any prime $q$ dividing $M_p$ satisfies $q \equiv 1 \pmod{p}$ (for a proof of this result, see, for example, \cite[Theorem 3]{Jaroma-Reddy}). We now prove a Carlitz module analogue of this result using the notion of the Carlitz annihilator of a prime in a similar manner that the notion of the order of an integer modulo a prime appears in the proof of the analogous result in the number field context mentioned above.

\begin{theorem}
\label{Theorem-Every-prime-dividing-a-Carlitz-module-Mersenne-number-is-congruent-to-1-mod-P}

Let $P$ be a monic prime in $A$, and let $M_P := \alpha C_P(1)$ be a Mersenne number, where $\alpha$ is an element in $\bF_q^{\times}$. Then $Q \equiv 1 \pmod{P}$ for any monic prime $Q$ dividing $M_P$.

\end{theorem}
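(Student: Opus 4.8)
The plan is to mimic the classical argument, in which one shows that the multiplicative order of $2$ modulo $q$ equals $p$, with the Carlitz annihilator $\wp_Q$ of $Q$ playing the role of that order. The notions developed in this section --- in particular part $(ii)$ of Proposition \ref{Proposition-The-result-motivating-the-definition-of-Carlitz-order-of-a-prime} and Lemma \ref{Lemma-The-Carlitz-order-of-P-divides-P-minus-1} --- have been arranged precisely so that this translation goes through with almost no friction.

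First I would observe that, since $\alpha \in \bF_q^{\times}$ is a unit and the monic prime $Q$ divides $M_P = \alpha C_P(1)$, in fact $Q$ divides $C_P(1)$, so that $C_P(1) \equiv 0 \pmod{Q}$. This is the function field analogue of the congruence $2^p \equiv 1 \pmod{q}$ that opens the classical proof. Next I would feed the nonzero polynomial $a = P$ into part $(ii)$ of Proposition \ref{Proposition-The-result-motivating-the-definition-of-Carlitz-order-of-a-prime}: the congruence $C_P(1) \equiv 0 \pmod{Q}$ forces the Carlitz annihilator $\wp_Q$ of $Q$ to divide $P$. Because $P$ is a monic prime while $\wp_Q$ is, by definition, monic of positive degree, the only possibility is $\wp_Q = P$. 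This is the exact analogue of concluding that the order of $2$ modulo $q$ is $p$ rather than $1$; the case corresponding to ``order $1$'' is excluded automatically here, since $\wp_Q$ is required to have positive degree.

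Finally, Lemma \ref{Lemma-The-Carlitz-order-of-P-divides-P-minus-1} gives $\wp_Q \mid Q - 1$, and substituting $\wp_Q = P$ yields $P \mid Q - 1$, that is, $Q \equiv 1 \pmod{P}$, which is the desired conclusion. I do not expect any genuine obstacle: once the dictionary ``order of an element $\leftrightarrow$ Carlitz annihilator'' is in place the argument is only a few lines long. The single point deserving a moment of care is the deduction $\wp_Q = P$, where one must note that a monic polynomial $\wp_Q$ of positive degree dividing the monic prime $P$ can only equal $P$ itself.
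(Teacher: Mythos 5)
Your proposal is correct and follows exactly the same route as the paper's own proof: reduce to $C_P(1) \equiv 0 \pmod{Q}$, apply part $(ii)$ of Proposition \ref{Proposition-The-result-motivating-the-definition-of-Carlitz-order-of-a-prime} to get $\wp_Q \mid P$ and hence $\wp_Q = P$, then conclude via Lemma \ref{Lemma-The-Carlitz-order-of-P-divides-P-minus-1}. No gaps.
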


\begin{proof}

Let $Q$ be any monic prime in $A$ such that $Q$ divides $M_P$. Thus $M_P = \alpha C_P(1) \equiv 0 \pmod{Q}$. Since $\alpha$ is a unit in $A$ and $Q$ is a monic prime, it follows that
\begin{align}
\label{Equation-The-1st-equation-in-the-result-about-being-congruent-to-1-mod-P-of-primes-dividing-Mersenne-numbers}
C_P(1) \equiv 0 \pmod{Q}.
\end{align}

Let $\wp_{Q}$ be the Carlitz annihilator of $Q$. By part $(ii)$ of Proposition \ref{Proposition-The-result-motivating-the-definition-of-Carlitz-order-of-a-prime} and $(\ref{Equation-The-1st-equation-in-the-result-about-being-congruent-to-1-mod-P-of-primes-dividing-Mersenne-numbers})$, we deduce that $\wp_Q$ divides $P$. Since $\wp_Q$ is a monic polynomial of positive degree and $P$ is a monic prime in $A$, it follows that $\wp_Q = P$. By Lemma \ref{Lemma-The-Carlitz-order-of-P-divides-P-minus-1}, we know that $\wp_Q$ divides $Q - 1$, and thus $P$ divides $Q - 1$. Therefore $Q \equiv 1 \pmod{P}$, which proves our contention.

\end{proof}

As a by-product of Theorem \ref{Theorem-Every-prime-dividing-a-Carlitz-module-Mersenne-number-is-congruent-to-1-mod-P}, one obtains the following result that precisely describes the Carlitz annihilator of any monic prime occurring in the prime factorization of a Mersenne number.

\begin{corollary}
\label{Corollary-The-Carlitz-orders-of-primes-in-the-prime-factorization-of-a-Mersenne-number}

Let $P$ be a monic prime in $A$, and let $M_P := \alpha C_P(1)$ be a Mersenne number, where $\alpha$ is an element in $\bF_q^{\times}$. Let $Q$ be a monic prime in $A$ such that $Q$ divides $M_P$, and let $\wp_Q$ be the Carlitz annilator of $Q$. Then $\wp_Q = P$.

\end{corollary}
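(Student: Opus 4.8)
The plan is to recognize that the equality $\wp_Q = P$ is already obtained as an intermediate step inside the proof of Theorem~\ref{Theorem-Every-prime-dividing-a-Carlitz-module-Mersenne-number-is-congruent-to-1-mod-P}; the corollary merely records this fact separately. Accordingly, I would reproduce only the relevant portion of that argument, which runs in three short moves.

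First I would pass from divisibility of $M_P$ to a congruence for $C_P(1)$. Since $Q \mid M_P = \alpha C_P(1)$ with $\alpha \in \bF_q^{\times}$ a unit and $Q$ a monic prime, the unit $\alpha$ cannot absorb any factor of $Q$, so $C_P(1) \equiv 0 \pmod{Q}$. Second, I would feed this congruence into the characterizing property of the Carlitz annihilator: part~(ii) of Proposition~\ref{Proposition-The-result-motivating-the-definition-of-Carlitz-order-of-a-prime}, applied to the prime $Q$ and to $a = P$, says that $\wp_Q \mid P$ precisely because $C_P(1) \equiv 0 \pmod{Q}$. Third, I would conclude by a divisibility count: $\wp_Q$ is, by construction, a monic polynomial of positive degree, while $P$ is a monic prime whose only monic divisor of positive degree is $P$ itself; hence $\wp_Q = P$.

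I do not expect any genuine obstacle, since the result is essentially a restatement of a line already present in the proof of the preceding theorem. The only point requiring a moment's care is the very first move, namely checking that the unit $\alpha$ plays no role in divisibility by the monic prime $Q$; this is immediate, as units are coprime to every prime. It is worth remarking, however, that the corollary is logically slightly sharper in emphasis than the theorem: whereas the theorem extracts the consequence $Q \equiv 1 \pmod{P}$ (via Lemma~\ref{Lemma-The-Carlitz-order-of-P-divides-P-minus-1}), the corollary isolates the structural fact that the Carlitz annihilator of \emph{every} monic prime divisor of a Mersenne number is exactly $P$.
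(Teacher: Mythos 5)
Your proposal is correct and matches the paper exactly: the paper's proof of this corollary simply says it follows immediately from the proof of Theorem \ref{Theorem-Every-prime-dividing-a-Carlitz-module-Mersenne-number-is-congruent-to-1-mod-P}, where the chain $Q \mid M_P \Rightarrow C_P(1) \equiv 0 \pmod{Q} \Rightarrow \wp_Q \mid P \Rightarrow \wp_Q = P$ appears verbatim. You have merely spelled out the same three steps explicitly, which is fine.
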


\begin{proof}

Following the proof of Theorem \ref{Theorem-Every-prime-dividing-a-Carlitz-module-Mersenne-number-is-congruent-to-1-mod-P}, we derive Corollary \ref{Corollary-The-Carlitz-orders-of-primes-in-the-prime-factorization-of-a-Mersenne-number} immediately.

\end{proof}

The following result fully characterizes all monic primes $P$ for which $\wp_P$ is a prime.

\begin{corollary}
\label{Corollary-A-criterion-for-determining-when-wp-P-is-a-prime}

Let $P$ be a monic prime in $A$, and let $\wp_P$ be the Carlitz annihilator of $P$. Then $\wp_P$ is a prime if and only if $P$ divides $C_Q(1)$ for some monic prime $Q$.

\end{corollary}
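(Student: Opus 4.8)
The plan is to prove both implications directly from the two defining properties of the Carlitz annihilator $\wp_P$ recorded in Proposition \ref{Proposition-The-result-motivating-the-definition-of-Carlitz-order-of-a-prime}. In fact, once the correct half of that proposition is invoked in each direction, the statement falls out almost immediately, so I do not anticipate a serious obstacle; the work is purely a matter of bookkeeping.

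For the forward implication, I would assume that $\wp_P$ is a prime and simply take $Q := \wp_P$ as the witnessing prime. Since $\wp_P$ is by definition a monic polynomial of positive degree, $Q$ is a monic prime. Property $(i)$ of Proposition \ref{Proposition-The-result-motivating-the-definition-of-Carlitz-order-of-a-prime} then gives $C_{\wp_P}(1) \equiv 0 \pmod{P}$, that is, $C_Q(1) \equiv 0 \pmod{P}$, which is precisely the assertion that $P$ divides $C_Q(1)$ for this monic prime $Q$.

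For the backward implication, I would assume that $P$ divides $C_Q(1)$ for some monic prime $Q$, so that $C_Q(1) \equiv 0 \pmod{P}$. Applying property $(ii)$ of Proposition \ref{Proposition-The-result-motivating-the-definition-of-Carlitz-order-of-a-prime} with $a = Q$, I conclude that $\wp_P$ divides $Q$. Since $\wp_P$ is monic of positive degree and $Q$ is a monic prime, the only monic divisor of $Q$ of positive degree is $Q$ itself; hence $\wp_P = Q$, and in particular $\wp_P$ is a prime.

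The only point requiring attention is to pair each direction with the appropriate half of Proposition \ref{Proposition-The-result-motivating-the-definition-of-Carlitz-order-of-a-prime}: property $(i)$ produces the witness $Q = \wp_P$ in the forward direction, while property $(ii)$, combined with the primality of $Q$ and the positivity of $\deg(\wp_P)$, forces $\wp_P = Q$ in the backward direction. Beyond this, there is no substantive difficulty, and indeed this argument closely mirrors the reasoning already used in the proof of Theorem \ref{Theorem-Every-prime-dividing-a-Carlitz-module-Mersenne-number-is-congruent-to-1-mod-P}.
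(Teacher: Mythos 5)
Your proof is correct and follows essentially the same route as the paper: the forward direction uses property $(i)$ of Proposition \ref{Proposition-The-result-motivating-the-definition-of-Carlitz-order-of-a-prime} with the witness $Q = \wp_P$, and the backward direction uses property $(ii)$ together with the primality of $Q$ to force $\wp_P = Q$. The only cosmetic difference is that the paper delegates the backward direction to Corollary \ref{Corollary-The-Carlitz-orders-of-primes-in-the-prime-factorization-of-a-Mersenne-number}, whose underlying argument is exactly the one you spell out.
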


\begin{proof}

Assume that $P$ divides $C_Q(1)$ for some monic prime $Q$. It follows from Corollary \ref{Corollary-The-Carlitz-orders-of-primes-in-the-prime-factorization-of-a-Mersenne-number} that $\wp_P = Q$, and thus $\wp_P$ is a prime.

Conversely assume that $\wp_P$ is a prime, say $Q$. By Proposition \ref{Proposition-The-result-motivating-the-definition-of-Carlitz-order-of-a-prime}, we see that $C_Q(1) = C_{\wp_P}(1) \equiv 0 \pmod{P}$, and thus $P$ divides $C_Q(1)$.
\end{proof}

\section{A criterion for determining whether a Mersenne number is prime}
\label{Section-A-criterion-for-determing-whether-a-Mersenne-number-is-prime}

In this section, we prove a criterion for determining whether a Mersenne number is prime. The criterion relies on primality of certain elements in cyclotomic function fields. For the notation used in this section, we refer the reader to Subsection \ref{Subsection-Notation}.

\begin{theorem}
\label{Theorem-A-criterion-for-determining-whether-a-Mersenne-number-is-prime}

Let $P$ be a monic prime in $A$. Let $M_P := \alpha C_P(1)$ be a Mersenne number, where $\alpha$ is an element in $\bF_q^{\times}$. Let $K_P$ be the $P$-th cyclotomic function field. Then $M_P$ is a prime in $A$ if and only if $1 - \lambda_P$ is a prime element in the ring of integers $\cO_P$ of $K_P$. Furthermore when $M_P$ is a prime, $M_P$ splits completely in $K_P$.

\end{theorem}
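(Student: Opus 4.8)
The plan is to reduce the statement to a norm computation in the cyclotomic function field $K_P$, combined with the splitting theory recalled from Hayes. First I would record the identity
\begin{align*}
C_P(1) = \Norm_{K_P/k}(1 - \lambda_P).
\end{align*}
Since $P$ is prime, its only monic divisors are $1$ and $P$, so $C_P(x) = x\Phi_P(x)$ and hence $\Phi_P(1) = C_P(1)$. On the other hand $\Phi_P$ is the minimal polynomial of $\lambda_P$, whose roots are exactly the conjugates $\sigma_P^{(a)}(\lambda_P) = C_a(\lambda_P)$ for $a \in \cS_P$; thus $\Phi_P(x) = \prod_{a \in \cS_P}\big(x - \sigma_P^{(a)}(\lambda_P)\big)$, and evaluating at $x = 1$ and using $\G_P = \{\sigma_P^{(a)} \mid a \in \cS_P\}$ yields the displayed identity. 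Because $\Phi_P$ is monic in $x$, the element $\lambda_P$, and hence $1 - \lambda_P$, lies in $\cO_P$; passing to ideals in the Dedekind domain $\cO_P$ then gives $\Norm_{K_P/k}\big((1-\lambda_P)\big) = (C_P(1)) = (M_P)$, the last equality because $\alpha \in \bF_q^{\times}$ is a unit.

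For the forward implication, suppose $M_P$ is prime, so $(M_P) = (C_P(1))$ is a nonzero prime ideal $(Q)$ of $A$. Writing $(1 - \lambda_P) = \prod_i \mathfrak{P}_i^{a_i}$ in $\cO_P$ and applying multiplicativity of the relative norm together with $\Norm(\mathfrak{P}_i) = (Q_i)^{f_i}$ for $\mathfrak{P}_i$ lying over $Q_i$, I obtain $\prod_i (Q_i)^{a_i f_i} = (Q)$. Since the right side is prime, there is a single index with $Q_i = Q$ and $a_i f_i = 1$, forcing $a_i = f_i = 1$; hence $(1 - \lambda_P)$ is a prime ideal and $1 - \lambda_P$ is a prime element of $\cO_P$. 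This direction is essentially automatic once the norm identity is in place.

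The reverse implication is where the splitting theory enters. Suppose $1 - \lambda_P$ is a prime element, so $(1 - \lambda_P) = \mathfrak{P}$ is a prime ideal lying over a monic prime $Q_0$, and $\Norm(\mathfrak{P}) = (Q_0)^{f}$ with $f$ the residue degree. Comparing with the norm identity gives $(C_P(1)) = (Q_0)^{f}$, so in particular $Q_0$ divides $C_P(1)$ and hence $M_P$. The obstacle is to show $f = 1$. The Eisenstein property of $\Phi_P$ at $P$ gives $C_P(1) = \Phi_P(1) \equiv 1 \pmod{P}$, so $P \nmid C_P(1)$ and therefore $Q_0 \neq P$; in particular $Q_0$ is unramified in $K_P$. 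By Theorem \ref{Theorem-Every-prime-dividing-a-Carlitz-module-Mersenne-number-is-congruent-to-1-mod-P}, $Q_0 \equiv 1 \pmod{P}$, so the Frobenius $\sigma_P^{(Q_0)}$ is trivial and $f = 1$. Thus $(C_P(1)) = (Q_0)$ is prime and $M_P = \alpha C_P(1)$ is prime.

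Finally, for the last assertion, when $M_P$ is prime the monic prime $Q$ with $(M_P) = (Q)$ divides $M_P$, so $Q \neq P$ and $Q \equiv 1 \pmod{P}$ as above; since an unramified prime of $K_P$ splits completely precisely when its Frobenius $\sigma_P^{(Q)}$ is trivial, i.e.\ when $Q \equiv 1 \pmod{P}$, the prime $Q$ splits completely in $K_P$. I expect the main work to be the clean derivation of $C_P(1) = \Norm_{K_P/k}(1 - \lambda_P)$ and the careful invocation of Hayes' description of the Frobenius $\sigma_P^{(Q)}$ for unramified $Q$; the remaining Dedekind-domain bookkeeping is routine.
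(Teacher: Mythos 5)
Your proposal is correct and follows essentially the same route as the paper: the norm identity $\Phi_P(1) = \Norm_{K_P/k}(1-\lambda_P)$, comparison of prime ideal factorizations via the ideal norm, and the congruence $Q \equiv 1 \pmod{P}$ from Theorem \ref{Theorem-Every-prime-dividing-a-Carlitz-module-Mersenne-number-is-congruent-to-1-mod-P} combined with the splitting law in $K_P$ to force $f=1$. Your extra observation that the Eisenstein property gives $\Phi_P(1) \equiv 1 \pmod{P}$, hence $Q_0 \neq P$ and $Q_0$ is unramified, is a small careful addition that the paper leaves implicit.
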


\begin{proof}

Throughout the proof, let $\Phi_P$ be the $P$-th cyclotomic polynomial. It is known \cite[Proposition 2.4]{Hayes} that $M_P = \alpha C_P(1) = \alpha \Phi_P(1)$.

Assume that $M_P$ is a prime in $A$. It follows that $\Phi_P(1)$ is a prime in $A$. We know that
\begin{align}
\label{Equation-Representation-of-Phi-P-1-as-the-norm-of-1-minus-lambda-P}
\Phi_P(1) = \prod_{\substack{0 \le \deg(a) < \deg(P)}}(1 - \sigma_P^{(a)}(\lambda_P)) = \Norm_{K_P/k}(1 - \lambda_P).
\end{align}

Let
\begin{align}
\label{Equation-The-prime-ideal-factorization-of-1-minus-lambda-P}
(1 - \lambda_P)\cO_P = \fB_1^{\epsilon_1}\fB_2^{\epsilon_2}\cdots\fB_h^{\epsilon_h}
\end{align}
be the prime ideal factorization of the principal ideal $(1 - \lambda_P)\cO_P$ in $\cO_P$, where the $\fB_i$ are distinct prime ideals in $\cO_P$ and the $\epsilon_i$ are positive integers. For each $1 \le i \le h$, let $\fp_i$ be the prime ideal of $k$ lying below $\fB_i$, and let $f_i := f(\fB_i/\fp_i)$ denote the relative degree of $\fB_i$ over $\fp_i$. Let $\N_{K_P/k}$ denote the norm map on ideals of $K_P$ over $k$.

We know from $(\ref{Equation-Representation-of-Phi-P-1-as-the-norm-of-1-minus-lambda-P})$, $(\ref{Equation-The-prime-ideal-factorization-of-1-minus-lambda-P})$ and \cite[Theorem 3.1.3]{Goldschmidt} that
\begin{align*}
\Phi_P(1)A = \Norm_{K_P/k}(1 - \lambda_P)A = \N_{K_P/k}((1 - \lambda_P)\cO_P) = \N_{K_P/k}(\prod_{i = 1}^h\fB_i^{\epsilon_i}) = \prod_{i = 1}^h\fp_i^{\epsilon_i f_i}.
\end{align*}
Since $\Phi_P(1)$ is a prime in $A$, we derive that $\Phi_P(1)A$ is a prime ideal of $A$. Thus it follows from the above equation that $h = \epsilon_1 = f_1 = 1$. Therefore we deduce from $(\ref{Equation-The-prime-ideal-factorization-of-1-minus-lambda-P})$ that $(1 - \lambda_P)\cO_P = \fB_1$, which is a prime ideal. Hence $1 - \lambda_P$ is a prime element in $\cO_P$.

Conversely suppose that $1 - \lambda_P$ is a prime element in $\cO_P$. Hence there is a prime ideal $\fB$ in $\cO_P$ such that $(1 - \lambda_P)\cO_P = \fB$. Let $\fp$ be the prime ideal of $k$ lying below $\fB$, and let $f := f(\fB/\fp)$ be the relative degree of $\fB$ over $\fp$. Since $A$ is a principal ideal domain, there exists a monic prime $\wp$ in $A$ such that $\fp = \wp A$. Using \cite[Theorem 3.1.3]{Goldschmidt} and repeating the same arguments as above, we deduce that
\begin{align*}
\Phi_P(1)A = \Norm_{K_P/k}(1 - \lambda_P)A = \N_{K_P/k}((1 - \lambda_P)\cO_P) = \N_{K_P/k}(\fB) = \fp^{f} = (\wp A)^f = \wp^f A.
\end{align*}
Thus $\wp^f$ divides $\Phi_P(1)$, and hence $\wp$ divides $\Phi_P(1)$. Hence $\wp$ divides $M_P$, and it thus follows from Theorem \ref{Theorem-Every-prime-dividing-a-Carlitz-module-Mersenne-number-is-congruent-to-1-mod-P} that $\wp \equiv 1 \pmod{P}$. By \cite[Theorem 12.10]{Rosen}, we deduce that $\wp A$ splits completely in $K_P$, and therefore $f = 1$. Hence $\Phi_P(1)A = \wp A$, which implies that $\Phi_P(1)$ is a prime in $A$. Therefore $M_P$ is a prime.

Finally when $M_P$ is a prime, we see that since $\alpha$ is a unit in $A$, there exists a monic prime $\wp$ in $A$ such that $M_PA = \alpha \Phi_P(1)A = \wp A$. Using the same arguments as above, we deduce that $\wp \equiv 1 \pmod{P}$, and therefore $M_P A = \wp A$ splits completely in $K_P$.

\end{proof}

\begin{remark}

Theorem \ref{Theorem-A-criterion-for-determining-whether-a-Mersenne-number-is-prime} is a Carlitz module analogue of the corollary to \cite[Theorem 3]{Helou} in the classical cyclotomic theory. Although we have adapted the ideas in the proof of the corollary in \cite{Helou} to the proof of Theorem \ref{Theorem-A-criterion-for-determining-whether-a-Mersenne-number-is-prime} in the Carlitz module context, there is a substantial difference between the two proofs. To be more specific, note that the congruence $\wp \equiv 1 \pmod{P}$ is a key step in both the proof of Theorem \ref{Theorem-A-criterion-for-determining-whether-a-Mersenne-number-is-prime} and that of the corollary to \cite[Theorem 3]{Helou}, where $\wp$ is any prime dividing the Mersenne number $M_P$. In the number field case, this congruence is a well-known result and can be derived using the notion of the orders of elements modulo primes (see \cite[Theorem 3]{Jaroma-Reddy} for a proof of this congruence). The congruence $\wp \equiv 1 \pmod{P}$ in the number field context can also be proved using some facts about primes dividing the norms of certain cyclotomic elements as shown in \cite{Helou}. In the Carlitz module context, we need to introduce the notion of the Carlitz annihilator of a prime as a replacement for that of the order of an element modulo a prime to derive the congruence $\wp \equiv 1 \pmod{P}$.

\end{remark}

\section*{Acknowledgements}

I am very grateful to Dinesh Thakur for many of his insights in function field arithmetic, making many useful comments, and pointing out some useful references. I thank the referee for useful comments. I thank my parents, Nguyen Ngoc Quang and Phan Thi Thien Huong, for their constant support. I was supported by a postdoctoral fellowship in the Department of Mathematics at University of British Columbia.


\begin{thebibliography}{179}


\bibitem{Goldschmidt} {\sc D.M. Goldschmidt}, \emph{Algebraic functions and projective curves}, Graduate Texts in Mathematics, {\bf215}. Springer-Verlag, New York (2003).

\bibitem{Goss} {\sc D. Goss}, \emph{Basic structures of function field arithmetic}, Ergebnisse der Mathematik und ihrer Grenzgebiete ({\bf3}) [Results in Mathematics and Related Areas ({\bf3})], {\bf35}, Springer-Verlag, Berlin, (1996).


\bibitem{Hall} {\sc C. Hall}, \emph{$L$-functions of twisted Legendre curves}, J. Number Theory {\bf119} (2006), no. {\bf1}, 128--147.




\bibitem{Hardy-Littlewood-Polya} {\sc G.H. Hardy, J.E. Littlewood and G. P\'olya}, \emph{Inequalities}, Cambridge University Press, Cambridge, UK, (1952).


\bibitem{Hayes} {\sc D.R. Hayes}, \emph{Explicit class field theory for rational function fields}, Trans. Amer. Math. Soc. {\bf189} (1974), pp. 77--91.


\bibitem{Helou} {\sc C. Helou}, \emph{Reciprocal relations between cyclotomic fields}, J. Number Theory {\bf130} (2010), no.{\bf8}, 1854--1875.






\bibitem{Jaroma-Reddy} {\sc J.H. Jaroma and K.N. Reddy}, \emph{Classical and alternative approaches to the Mersenne and Fermat numbers}, Amer. Math. Monthly {\bf114} (2007), no.{\bf8}, 677--687.

\bibitem{Mauduit} {\sc V. Mauduit}, \emph{Carmichael-Carlitz polynomials and Fermat-Carlitz quotients}, Finite fields and applications (Glasgow, 1995), 229--242, London Math. Soc. Lecture Note Ser., {\bf233}, Cambridge Univ. Press, Cambridge, (1996).


\bibitem{Murata-Pomerance} {\sc L. Murata and C. Pomerance}, \emph{On the largest prime factor of a Mersenne number}, Number theory, 209--218,
CRM Proc. Lecture Notes, 36, Amer. Math. Soc., Providence, RI, (2004).



\bibitem{Pollack} {\sc P. Pollack}, \emph{Simultaneous prime specializations of polynomials over finite fields}, Proc. Lond. Math. Soc. ({\bf3}) {\bf97} (2008), no.{\bf3}, 545--567.



\bibitem{Rosen} {\sc M. Rosen}, \emph{Number theory in function fields}, Graduate Texts in Mathematics, {\bf210}. Springer-Verlag, New York (2002).


\bibitem{SSTT} {\sc J. Sauerberg, L. Shu, D.S. Thakur, and G. Todd}, \emph{Infinitude of Wilson primes for $\bF_q[t]$}, Acta Arith. {\bf157} (2013), no.{\bf1}, 91--100.



\bibitem{Thaur-Wieferich-primes} {\sc D.S. Thakur}, \emph{Iwasawa theory and cyclotomic function fields}, In Arithmetic Geometry (Tempe, AZ 1993), vol. {\bf174} of Contemp. Math. 157--165, Amer. Math. Soc. (1994).



\bibitem{Thakur-book} {\sc D.S. Thakur}, \emph{Function Field Arithmetic}, World Scientific Publishing Co., Inc., River Edge, NJ, (2004).



\bibitem{Thakur1} {\sc D.S. Thakur}, \emph{Differential characterization of Wilson primes for $\bF_q[t]$}, Algebra \& Number Theory {\bf7} (2013), no. {\bf8}, 1841--1848.


\bibitem{Thaur-Fermat-Wilson-congruences} {\sc D.S. Thakur}, \emph{Fermat-Wilson congruences and zeta values}, preprint (2013).


\bibitem{Villa-Salvador} {\sc G.D. Villa Salvador}, \emph{Topics in the theory of algebraic function fields}, Mathematics: Theory \& Applications. Birkh\"auser Boston, Inc., Boston, MA (2006).



\end{thebibliography}
\end{document}